\documentclass[a4paper,12pt,reqno
]{amsart}

\usepackage[T2A]{fontenc}
\usepackage[cp1251]{inputenc}
\usepackage[english]{babel}

\usepackage{amssymb,upref}
\usepackage{url}

\usepackage{orcidlink}
\let\orcidID\orcidlink
\hypersetup{hidelinks}

\allowdisplaybreaks

\theoremstyle{plain}
\newtheorem{theorem}{Theorem}
\newtheorem{lemma}{Lemma}
\theoremstyle{remark}
\newtheorem{remark}{Remark}

\newcommand{\N}{\mathbb{N}}

\begin{document}

\title[Tribin functions related to $s$-symbol encodings]%
{A class of Tribin functions\\
  related to $s$-symbol encodings of numbers\\
  with a zero redundancy}
\author[Mykola Pratsiovytyi et al.]{Mykola Pratsiovytyi%
  \,\orcidID{0000-0001-6130-9413}}
\address{Mykhailo Drahomanov Ukrainian State University,
  Institute of Mathematics of the National Academy of Sciences of Ukraine}
\email{pratsiovytyi@imath.kiev.ua}
\email{prats4444@gmail.com}
\author[]{Sofiia Ratushniak%
  \,\orcidID{0009-0005-2849-6233}}
\address{Institute of Mathematics of the National Academy of Sciences of Ukraine,
  Mykhailo Drahomanov Ukrainian State University}
\email{ratush404@gmail.com}
\author[]{Oleksandr Baranovskyi%
  \,\orcidID{0009-0001-4495-1778}}
\address{Institute of Mathematics of the National Academy of Sciences of Ukraine}
\email{baranovskyi@imath.kiev.ua}
\author[]{Iryna Lysenko%
  \,\orcidID{0009-0000-5299-7787}}
\address{Mykhailo Drahomanov Ukrainian State University}
\email{i.m.lysenko@udu.edu.ua}

\subjclass{Primary 26A27}

\keywords{%
  Encoding of numbers with a finite alphabet,
  $g$-representation of numbers,
  $g$-cylinder,
  $s$-adic representation of numbers,
  topologically equivalent representations,
  continuous nowhere monotonic function,
  continuous non-differentiable function,
  level set of a function.\\
    This work was supported by a grant from the Simons
Foundation (SFI-PD-Ukraine-00014586, M.P., S.R., O.B.)%
  }

\begin{abstract}
  In this paper, we consider a continuum class
  of continuous nowhere monotonic functions
  that generalize certain non-differentiable functions,
  including the Bush function, Wunderlich function,
  continuous Cantor projectors, Tribin function, etc.

  We consider a construction of the function
  related to $s$-symbol representations of numbers with a zero redundancy
  that are topologically equivalent to the classical $s$-adic representation
  (a value of the function has a two-symbol representation).
  Moreover, the condition on the first digit of a representation
  for the value of the function is more general than conditions
  considered before.

  The main object of study is a continuous function defined by equality
  \begin{gather*}
    f(\Delta^{s^*}_{\alpha_1\alpha_2\ldots\alpha_n\ldots})
    = \Delta^{2^*}_{\beta_1\beta_2\ldots\beta_n\ldots},
    \quad
    \alpha_n \in \{ 0, 1, 2, \ldots, s - 1 \} \equiv A_s, \\
    \beta_1 = \begin{cases}
      0 & \text{if $\alpha_1 \in A_0$}, \\
      1 & \text{if $\alpha_1 \in A_1$},
    \end{cases}
    \quad
    \beta_{n+1} = \begin{cases}
      \beta_n     & \text{if $\alpha_{n+1} = \alpha_n$}, \\
      1 - \beta_n & \text{if $\alpha_{n+1} \neq \alpha_n$}.
    \end{cases}
  \end{gather*}
  where
  $\Delta^{s^*}_{\alpha_1\alpha_2\ldots\alpha_n\ldots}$
  is an $s$-symbol representation of a number $x \in [0, 1]$
  that is topologically equivalent to the classical $s$-adic representation,
  $\Delta^{2^*}_{\beta_1\beta_2\ldots\beta_n\ldots}$
  is a two-symbol representation
  that is topologically equivalent to the classical binary representation,
  and $A_0 \cup A_1 = A_s$, $A_0 \neq A_s \neq A_1$.
  We prove that the function $f$ has finite and continuum level sets,
  its range is the interval $[0, 1]$,
  the image of a cylinder under the mapping $f$ is a cylinder,
  and the function has an unbounded variation.
\end{abstract}

\maketitle

\section*{Introduction}

In this paper, we discuss real-valued functions of a real argument
defined on an interval.
If a function does not have any (arbitrarily small) interval of monotonocity,
it is called \emph{nowhere monotonic}.
This property of a function is necessary
but not sufficient for it to be nowhere differentiable.
Among nowhere monotonic functions,
there exist functions of a bounded and unbounded variation~\cite{%
  Pratsiovytyi:2022:OCS%
},
non-differentiable functions
and functions with ``satisfactory'' differential properties,
including singular functions
(functions whose derivative is equal to zero almost everywhere
with respect to the Lebesgue measure)~\cite{%
  Pratsiovytyi:2022:OCS,%
  Pratsiovytyi:2011:NMS:en,%
  Turbin:1992:FMF:en%
}.

Now we can observe high-level interest
to continuous nowhere monotonic and non-differentiable functions~\cite{%
  Chen:2020:FTS,%
  Jarnicki:2015:CND,%
  Massopust:1997:FFA%
}
both in function theory,
and in fractal analysis~\cite{%
  Pratsiovytyi:2002:FVO:en,%
  Pratsiovytyi:2009:DFV:en,%
  Pratsiovytyi:2013:FPF%
},
theory of locally complicated probability measures~\cite{%
  Pratsiovytyi:1998:FPD:en%
}
as well as theory of dynamical systems.
Study of nowhere monotonic functions are essentially expanded,
and new techniques, methods, and tools for their description and study appear~\cite{%
  Ratushniak:2023:NNMA2:en,%
  Ratushniak:2023:NNMA:en%
}.
One of the sufficiently productive tools for defining functions
is various systems of encoding for real numbers (numeral systems)~\cite{%
  Galambos:1976:RRN,%
  Schweiger:1995:ETF,%
  Pratsiovytyi:2022:DSK:en%
}
and ``transducers'' of digits of one representation to another
(automata with finite memory)~\cite{%
  Pratsiovytyi:2013:SSN:en,%
  Pratsiovytyi:2022:CA2S%
}.
Such a way of defining a function goes back to papers by W.~Sierpi\'nski~\cite{%
  Sierpinski:1916:CCC%
},
T.~Takagi~\cite{%
  Takagi:1901:SEC%
}
and others.

One of the fruitful ideas for defining nowhere differentiable
and thus nowhere monotonic functions
is a technique initially used in papers~\cite{%
  Wunderlich:1952:USN,%
  Bush:1952:CFD,%
  Pratsiovytyi:1989:NKP:en,%
  Turbin:1992:FMF:en%
}
and is wide-spread in various papers
using various systems of representation for numbers~\cite{%
  Pratsiovytyi:2019:NMF:en,%
  Pratsiovytyi:2021:GTF,%
  Ratushniak:2023:NNMA2:en,%
  Ratushniak:2023:NNMA:en%
}.
In this paper, we generalize certain known results~\cite{%
  Bush:1952:CFD,%
  Bush:1962:LRF,%
  Bush:1971:PFC,%
  Wunderlich:1952:USN,%
  Pratsiovytyi:1989:NKP:en,%
  Pratsiovytyi:2002:FVO:en,%
  Panasenko:2009:OKN:en,%
  Panasenko:2009:HBD,%
  Pratsiovytyi:2021:GTF%
}
considering construction of a function
defined by abstract $s$-symbol representations
that are topologically equivalent to the classical $s$-adic representation
for the argument
and two-symbol representations for the value of the function.

\section{An encoding of real numbers and their $g$-representation}

Suppose $1 < s$ is a fixed natural number,
$A_s = \{ 0, 1, 2, \ldots, s-1 \}$ is an $s$-adic alphabet (a set of digits),
and $L_s = A_s \times A_s \times \ldots$ is a space of sequences
of elements of the alphabet.

Let us recall~\cite{%
  Pratsiovytyi:2022:DSK:en%
}
that an \emph{encoding of numbers} of the closed interval $[0, 1]$
with an alphabet $A_s$
is a surjective mapping $g$ of the space $L_s$ into the closed interval $[0, 1]$:
$L_s \xrightarrow{g} [0, 1]$.

If $g((\alpha_n)) = x \in [0, 1]$,
then the sequence $(\alpha_n)$ is called a \emph{$g$-code} of a number $x$.
This is written as
$x = \Delta^g_{\alpha_1\alpha_2\ldots\alpha_n\ldots}$
and called a \emph{$g$-representation}.
Moreover, $\alpha_n$ is called an $n$th digit of this representation.

In the topological and metric theory of a representation of numbers,
the important notion is a notion of a $g$-cylinder.
A $g$-cylinder of rank $m$ with base $c_1c_2\ldots c_m$ is a set
\[
  \Delta^g_{c_1\ldots c_m}
  = \{ x \colon x = \Delta^g_{c_1\ldots c_m\alpha_1\ldots\alpha_n\ldots}, \;
  (\alpha_n) \in L_s \}.
\]
It is evident that
\begin{gather*}
  \Delta \equiv [0, 1]
  = \bigcup_{i=0}^{s-1} \Delta^g_{i}
  = \bigcup_{i_1=0}^{s-1} \bigcup_{i_2=0}^{s-1} \Delta^g_{i_1i_2}
  = \ldots
  = \bigcup_{i_1=0}^{s-1} \ldots \bigcup_{i_n=0}^{s-1} \Delta^g_{i_1i_2\ldots i_n}, \\
  \bigcap_{n=1}^\infty \Delta^g_{c_1c_2\ldots c_n}
  = \Delta^g_{c_1c_2\ldots c_n\ldots}
  = x.
\end{gather*}

\begin{remark}
  If
  $x = \Delta^g_{\alpha_1\alpha_2\ldots\alpha_n\ldots}$,
  then
  $x \in \Delta^g_{\alpha_1\alpha_2\ldots\alpha_n}$
  for all $n \in \N$ and
  \[
    x = \bigcap\limits_{n=1}^\infty \Delta^g_{\alpha_1\alpha_2\ldots\alpha_n},
  \]
  and thus,
  $|\Delta^g_{\alpha_1\alpha_2\ldots\alpha_n}| \to 0$
  as
  $n \to \infty$.
\end{remark}

The system of $g$-encoding is said to have a \emph{zero redundancy}
if almost all numbers have a unique $g$-representation
(perhaps, except for a countable set of numbers having two representations).
Points that have two $g$-representations are called \emph{$g$-binary},
and those that have a unique $g$-representation are called \emph{$g$-unary}.
Digits $\alpha_n(x)$ of the representation of a $g$-unary point
are well-defined functions of a number $x$.

An encoding ($g$-representation) of numbers is called \emph{continuous}
if every $g$-cylinder is an interval,
and cylinders of the same rank do not have common internal points
(do not overlap).
In the sequel, we only discuss continuous $g$-representations
with a zero redundancy.
The simplest example of a continuous encoding for numbers
of the closed interval $[0, 1]$ that has a zero redundancy
is a classical $s$-adic representation:
\[
  x = \frac{\alpha_1}{s} + \frac{\alpha_2}{s^2} + \ldots
    + \frac{\alpha_n}{s^n} + \ldots
  \equiv \Delta^s_{\alpha_1\alpha_2\ldots\alpha_n\ldots}.
\]
Having one of the $s$-symbol encodings (representations) for numbers
of the unit interval,
we can obtain a new encoding
using a continuous strictly increasing probability distribution function $\gamma$:
\[
  \Delta^r_{c_1\ldots c_m} = \gamma(\Delta^g_{c_1\ldots c_m})
  \quad
  \text{for all $\Delta^g_{c_1\ldots c_m}$}.
\]
Thus it is clear that there exist infinitely many $s$-symbol encodings.

\begin{lemma}
  \label{lem:g.limit.equiv}
  Let $x_0$ be a $g$-unary point.
  Then the condition $x \to x_0$ is equivalent to the condition $m \to \infty$
  where $\alpha_m(x) \neq \alpha_m(x_0)$
  but $\alpha_i(x) = \alpha_i(x_0)$ for $i < m$.
\end{lemma}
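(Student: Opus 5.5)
Both directions can be argued inside the nested family of cylinders $\Delta^g_{\alpha_1(x_0)\ldots\alpha_m(x_0)}$ containing $x_0$. To fix notation, for $x\neq x_0$ let $m(x)$ be the first index with $\alpha_m(x)\neq\alpha_m(x_0)$. This index is well defined once we fix a $g$-representation of $x$. If $x$ is $g$-binary, either representation may be used; the argument below works for both choices. All cylinders are closed intervals, since the representation is continuous and the cylinders are compact images of cylinder sets under the surjection $g$.

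\emph{Direction $m\to\infty \Rightarrow x\to x_0$.} Both $x$ and $x_0$ lie in $\Delta^g_{\alpha_1(x_0)\ldots\alpha_{m-1}(x_0)}$, so
\[
|x-x_0|\le \bigl|\Delta^g_{\alpha_1(x_0)\ldots\alpha_{m-1}(x_0)}\bigr|.
\]
By the Remark after the definition of cylinders, this length tends to $0$ as $m\to\infty$. This direction is immediate.

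\emph{Direction $x\to x_0 \Rightarrow m\to\infty$.} This is the main step, and it is where $g$-unarity is used. Fix $k$. I will show that $x_0$ is an interior point of $\Delta^g_{\alpha_1(x_0)\ldots\alpha_k(x_0)}$, or an endpoint of $[0,1]$ with a one-sided neighbourhood inside this cylinder. Granting that, there is $\delta>0$ such that $|x-x_0|<\delta$ forces $x$ into the cylinder. Then every representation of $x$ begins with $\alpha_1(x_0)\ldots\alpha_k(x_0)$, so $m(x)>k$.

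\emph{Main obstacle: the interior-point claim.} The rank-$k$ cylinders are finitely many closed intervals covering $[0,1]$ that do not overlap. Suppose $x_0$ were a boundary point of $C=\Delta^g_{\alpha_1(x_0)\ldots\alpha_k(x_0)}$ other than $0$ or $1$. Then points arbitrarily close to $x_0$ on the other side lie in the union of the other rank-$k$ cylinders. That union is a finite union of closed sets, so $x_0$ belongs to some other rank-$k$ cylinder $\Delta^g_{c_1\ldots c_k}$ with $c_1\ldots c_k\neq\alpha_1(x_0)\ldots\alpha_k(x_0)$. Hence $x_0=\Delta^g_{c_1\ldots c_k\beta_1\beta_2\ldots}$ for some sequence $(\beta_n)$. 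This gives $x_0$ a second $g$-representation, contradicting $g$-unarity.

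The same argument shows that no other cylinder of rank $k$ contains $x_0$. Since the remaining cylinders form a closed set missing $x_0$, some neighbourhood of $x_0$ avoids all of them. That neighbourhood therefore lies in $C$, which settles the endpoint cases $x_0\in\{0,1\}$ as well.

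Finally I would note that ``$x\to x_0$'' is read along $x\neq x_0$, so that $m(x)$ is defined. For $g$-binary $x$, every representation of $x$ lies in $C$ once $x$ is in the neighbourhood found above, so $m(x)>k$ regardless of which representation was chosen.
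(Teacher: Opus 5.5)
Your proof is correct and is essentially the paper's argument. The direction $m\to\infty\Rightarrow x\to x_0$ uses the same cylinder-length bound, and the converse rests, as in the paper's contradiction argument, on $x_0$ having a neighbourhood inside $\Delta^g_{\alpha_1(x_0)\ldots\alpha_k(x_0)}$ (this is what makes the paper's constant $\min\{x_0-\min\Delta^g_{\alpha_1\ldots\alpha_p},\,\max\Delta^g_{\alpha_1\ldots\alpha_p}-x_0\}$ positive). Your version actually proves that positivity from closedness of cylinders and $g$-unarity, and it correctly handles the endpoints $x_0\in\{0,1\}$ and $g$-binary $x$, which the paper passes over.
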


\begin{proof}
  Let
  $x_0 = \Delta^g_{\alpha_1(x_0)\ldots\alpha_n(x_0)\ldots}$ and $x \neq x_0$.
  If
  $x_0 \neq x = \Delta^g_{\alpha_1(x)\ldots\alpha_n(x)\ldots}$,
  then it is evident that there exists $m$ such that
  $\alpha_m(x) \neq \alpha_m(x_0)$
  but $\alpha_i(x) = \alpha_i(x_0)$ for $i = \overline{1, m-1}$.

  Since
  $x_0, x \in \Delta^g_{\alpha_1(x_0)\ldots\alpha_{m-1}(x_0)}$,
  we have
  \[
    |x - x_0| \leq |\Delta^g_{\alpha_1(x_0)\ldots\alpha_{m-1}(x_0)}| \to 0
    \quad
    \text{as $m \to \infty$}.
  \]
  Hence, $x \to x_0$.

  Now we prove the converse statement.
  Let $(x_n)$ be an arbitrary sequence
  that tends to $x_0$ as $n \to \infty$.
  Assume the converse, namely, $m$ does not tends to infinity
  that is there exists a natural $p$ such that $m < p$
  for any $n \in \N$ and $m(n) \in \N$.
  Since
  \[
    x_0 = \Delta^g_{\alpha_1(x_0)\ldots\alpha_n(x_0)\ldots}
    = \bigcap_{n=1}^\infty \Delta^g_{\alpha_1(x_0)\ldots\alpha_n(x_0)}
      \subset \Delta^g_{\alpha_1(x_0)\ldots\alpha_p(x_0)},
  \]
  we have
  $x_0 \in \Delta^g_{\alpha_1(x_0)\ldots\alpha_p(x_0)}$,
  but
  $x_n \notin \Delta^g_{\alpha_1(x_0)\ldots\alpha_p(x_0)}$.
  Then
  \[
    |x_n-x_0| \geq \min\{
      x_0 - \min\{ \Delta^g_{\alpha_1\ldots\alpha_p} \}%
      ,
      \max\{ \Delta^g_{\alpha_1\ldots\alpha_p} \} - x_0
    \}
    = \text{const}.
  \]
  This contradicts the condition $x_n \to x_0$ as $n \to \infty$.

  The obtained contradiction proves
  that the condition $x_n \to x_0$ implies $m \to \infty$.
\end{proof}

Two encodings with a zero redundancy
($g_1$-representation and $g_2$-representation of numbers)
using a common alphabet $A$
are called \emph{topologically equivalent}
if the function (projector)
$\Delta^{g_1}_{\alpha_1\alpha_2\ldots\alpha_n\ldots}
\to \Delta^{g_2}_{\alpha_1\alpha_2\ldots\alpha_n\ldots}$
is continuous and strictly monotonic.
The classical binary representation and $Q_2$-representation
as well as nega-binary representation and $A_2$-continued representation
are topologically equivalent.
However, the nega-binary representation is not topologically equivalent
to classical binary representation.

\section{The main object of study}

Suppose $2 < s$ is a fixed natural number,
$\Delta^{s^*}_{\alpha_1\alpha_2\ldots\alpha_n\ldots}$
is an $s$-symbol representation of a number $x \in [0, 1]$
that is topologically equivalent to the classical $s$-adic representation:
\[
  x = \Delta^s_{\alpha_1\alpha_2\ldots\alpha_n\ldots}
  = \frac{\alpha_1}{s} + \frac{\alpha_2}{s^2} + \ldots
  + \frac{\alpha_n}{s^n} + \ldots,
  \quad
  (\alpha_n) \in L_s,
\]
and
$\Delta^{2^*}_{\beta_1\beta_2\ldots\beta_n\ldots}$
is a two-symbol representation of a number $x \in [0, 1]$
that is topologically equivalent to the classical binary representation,
$(\beta_n) \in L_2$.

Let $A_0 \cup A_1 = A_s$, $A_0 \neq A_s \neq A_1$.
Let us define a function $f$ by the following equality:
\begin{equation}
  \label{eq:g.tribin.def.f}
  f(\Delta^{s^*}_{\alpha_1\alpha_2\ldots\alpha_n\ldots})
  = \Delta^{2^*}_{\beta_1\beta_2\ldots\beta_n\ldots},
\end{equation}
where
\begin{align}
  \label{eq:g.tribin.def.b1}
  \beta_1 &= \begin{cases}
    0 & \text{if $\alpha_1 \in A_0$}, \\
    1 & \text{if $\alpha_1 \in A_1$},
  \end{cases} \\
  \label{eq:g.tribin.def.bn}
  \beta_{n+1} &= \begin{cases}
    \beta_n     & \text{if $\alpha_{n+1} = \alpha_n$}, \\
    1 - \beta_n & \text{if $\alpha_{n+1} \neq \alpha_n$}.
  \end{cases}
\end{align}

From the definition of digits $(\beta_n)$ of the $2^*$-representation
for the value of the function we see
that the digit $\beta_n$ depends on $n$ first digits of the argument,
i.e., is a function of $n$ variables, $\varphi_n \colon A_s^n \to A_s$.
Moreover,
$\beta_1 = \varphi_1(\alpha_1)$,
$\beta_{n+1} = \varphi_{n+1}(\alpha_1,\ldots,\alpha_n,\alpha_{n+1})
= \delta(\beta_n,\alpha_n,\alpha_{n+1})$.

\section{Well-definedness of the function and its continuity}

\begin{remark}
  If calculations of the values of a function
  by formulae~\eqref{eq:g.tribin.def.f}--\eqref{eq:g.tribin.def.bn}
  of two different representations of an $s^*$-binary point
  \[
    \Delta^{s^*}_{c_1\ldots c_m(0)}
    = \Delta^{s^*}_{c_1\ldots c_{m-1}[c_m-1](s-1)}
  \]
  do not coincide,
  we will say that the function $f$ is not well defined at this point.
  We can easily overcome this disadvantage using an agreement
  to use only one of two representations,
  but it does not ensure a continuity of the function at this point.
\end{remark}

\begin{lemma}
  The function $f$ is well defined
  by equalities~\eqref{eq:g.tribin.def.f}--\eqref{eq:g.tribin.def.bn}.
\end{lemma}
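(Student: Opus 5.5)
The plan is to reduce well-definedness to a finite case analysis at the $s^*$-binary points, using topological equivalence to transfer the structure of binary points from the classical representations. The projector $\Delta^{s}_{\alpha_1\alpha_2\ldots}\mapsto\Delta^{s^*}_{\alpha_1\alpha_2\ldots}$ is continuous and strictly increasing, so the $s^*$-cylinders of each rank are arranged in the same order as the $s$-adic ones. Hence the $s^*$-binary points are exactly the points
\[
\Delta^{s^*}_{c_1\ldots c_{m-1}c_m(0)}=\Delta^{s^*}_{c_1\ldots c_{m-1}[c_m-1](s-1)},\qquad c_m\ge 1 .
\]
For the same reason, in the $2^*$-representation we have
\[
\Delta^{2^*}_{b_1\ldots b_k1(0)}=\Delta^{2^*}_{b_1\ldots b_k0(1)} .
\]
So it suffices to show that, at every such point, the two digit sequences $(\beta_n)$ produced by \eqref{eq:g.tribin.def.b1}--\eqref{eq:g.tribin.def.bn} are either identical or form such a pair of $2^*$-codes of one number.

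First I compute the tails. For the code $c_1\ldots c_m(0)$, write $x=\beta_m$. Since $c_m\neq 0$, we get $\beta_{m+1}=1-x$. After that all $\alpha$'s are equal, so the $\beta$ sequence is $\ldots x(1-x)(1-x)\ldots$.

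For the code $c_1\ldots c_{m-1}[c_m-1](s-1)$, write $y=\beta_m$. Since $c_m-1\le s-2<s-1$, we get $\beta_{m+1}=1-y$, and the sequence is $\ldots y(1-y)(1-y)\ldots$.

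The digits $\beta_1,\ldots,\beta_{m-1}$ coincide for both codes, because they depend only on the common prefix $c_1\ldots c_{m-1}$. So everything reduces to comparing $x$ and $y$.

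Case $m=1$. Here $x=\beta_1(c_1)$ and $y=\beta_1(c_1-1)$.
\begin{itemize}
\item If $c_1$ and $c_1-1$ lie in the same set $A_i$, then $x=y$ and the two $\beta$-codes coincide.
\item Otherwise, the two codes are $1(0)$ and $0(1)$, which represent the same number in the $2^*$-representation.
\end{itemize}

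Case $m\ge2$. Let $b=\beta_{m-1}$ and use \eqref{eq:g.tribin.def.bn}.
\begin{itemize}
\item If $c_{m-1}\notin\{c_m,c_m-1\}$, then both steps flip, so $x=y=1-b$ and the codes are identical.
\item If $c_{m-1}=c_m$, then $x=b$ and $y=1-b$. The codes are $\beta_1\ldots\beta_{m-1}\,b(1-b)$ and $\beta_1\ldots\beta_{m-1}(1-b)(b)$, where the last symbol is repeated in each.
\item If $c_{m-1}=c_m-1$, the situation is symmetric, with $x=1-b$ and $y=b$.
\end{itemize}
In the last two cases the codes have the form $w1(0)$ and $w0(1)$ in some order, so they define the same number by the $2^*$ identity above.

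The only step needing care is the justification that topological equivalence forces the binary points of $s^*$ and $2^*$ to have exactly the classical form. I would argue this directly from the strict monotonicity of the projectors: adjacent cylinders $\Delta_{c_1\ldots c_m}$ and $\Delta_{c_1\ldots [c_m-1]}$ share exactly the image of the common endpoint. The rest is the bookkeeping above.
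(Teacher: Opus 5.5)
Your proof is correct and follows the paper's route: you compare the $\beta$-codes generated by the two representations $c_1\ldots c_m(0)$ and $c_1\ldots c_{m-1}[c_m-1](s-1)$ of each $s^*$-binary point, first in rank $1$ and then in rank $m$ (your appeal to topological equivalence only needs the projectors to be well defined and injective, so it does not matter whether they increase or decrease). Your split of the rank-$m$ case by whether $c_{m-1}\in\{c_m,c_m-1\}$ is more careful than the paper, whose displayed formula claims the second code always has $m$th digit $1-\beta_m$; this fails when $c_{m-1}\notin\{c_m,c_m-1\}$, where, as you observe, the two codes simply coincide.
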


\begin{proof}
  First we consider binary points of the 1st rank, i.e., points of the form
  $\Delta^{s^*}_{a(0)} = \Delta^{s^*}_{[a-1](s-1)}$.
  If $a \in A_i$, $i \in \{ 0, 1 \}$ then
  \[
    f(\Delta^{s^*}_{a(0)}) = \Delta^{2^*}_{i(1-i)},
    \quad
    f(\Delta^{s^*}_{[a-1](s-1)}) = \begin{cases}
      \Delta^{2^*}_{0(1)} & \text{if $a-1 \in A_0$}, \\
      \Delta^{2^*}_{1(0)} & \text{if $a-1\in A_1$}.
    \end{cases}
  \]

  Now we consider binary points of the $m$th rank:
  \[
    \Delta^{s^*}_{a_1\ldots a_{m-1}a_m(0)}
    = \Delta^{s^*}_{a_1\ldots a_{m-1}[a_m-1](s-1)}.
  \]
  Let
  $\varphi_i(\alpha_1,\ldots,\alpha_i) = \beta_i$
  for
  $i = \overline{1,m}$.
  Then
  \begin{align*}
    f(\Delta^{s^*}_{a_1\ldots a_{m-1}a_m(0)})
    &= \Delta^{2^*}_{\beta_1\ldots\beta_{m-1}\beta_m(1-\beta_m)}, \\
    f(\Delta^{s^*}_{a_1\ldots a_{m-1}[a_m-1](s-1)})
    &= \Delta^{2^*}_{\beta_1\ldots\beta_{m-1}[1-\beta_m](\beta_m)}.
  \end{align*}

  It is evident that formulae~\eqref{eq:g.tribin.def.f}--\eqref{eq:g.tribin.def.bn}
  give equal values for both representations of $s^*$-binary points,
  in both cases (either $\beta_m = 0$ or $\beta_m = 1$).
  Thus the function $f$ is well defined.
  This proves the lemma.
\end{proof}

\begin{remark}
  Well-definedness of the function can be failed
  if a binary points could be found such that values of the function
  calculated by formulae~\eqref{eq:g.tribin.def.f}--\eqref{eq:g.tribin.def.bn}
  for different representations
  turn out to be different.
\end{remark}

\begin{theorem}
  \label{thm:g1g2.func.continuous}
  Suppose given $g_1$-representation and $g_2$-representation for numbers
  of the interval $[0, 1]$
  are continuous representations with a zero redundancy.
  A necessary and sufficient condition for the function $\psi$
  defined by equality
  \begin{equation}
    \label{eq:g1g2.func.def}
    \psi(x = \Delta^{g_1}_{\alpha_1\alpha_2\ldots\alpha_n\ldots})
    = \Delta^{g_2}_{\beta_1\beta_2\ldots\beta_n\ldots},
  \end{equation}
  where
  $\beta_n = \varphi_n(\alpha_1,\ldots,\alpha_n)$,
  to be continuous at every point of the invterval $[0, 1]$
  is that the function be well-defined at every $g_1$-binary point.
\end{theorem}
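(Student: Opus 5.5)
We prove necessity first and then sufficiency. Throughout we use that both representations are continuous with zero redundancy, so cylinders are closed intervals whose lengths tend to $0$ along any nested sequence (Remark~1).

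\emph{Necessity.} We argue by contrapositive. Suppose $\psi$ is not well defined at some $g_1$-binary point $x_0$. Then its two codes $(\alpha_n)$ and $(\alpha'_n)$ give different values $y\neq y'$. These codes share a prefix $c_1\ldots c_{m-1}$, and $x_0$ is the common endpoint of the adjacent cylinders $\Delta^{g_1}_{c_1\ldots c_{m-1}\alpha_m\ldots\alpha_n}$ and $\Delta^{g_1}_{c_1\ldots c_{m-1}\alpha'_m\ldots\alpha'_n}$, which lie on opposite sides of $x_0$. Take points $x_n\to x_0$ from one side, inside $\Delta^{g_1}_{\alpha_1\ldots\alpha_n}$. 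Each such point has a code beginning with $\alpha_1\ldots\alpha_n$, so $\psi(x_n)\in\Delta^{g_2}_{\beta_1\ldots\beta_n}$, and hence $\psi(x_n)\to y$. From the other side the same argument gives limit $y'$. Since $y\ne y'$, at least one one-sided limit differs from whichever value we assign at $x_0$, so $\psi$ is discontinuous there. If one insists the function has no value at $x_0$, it is simply not a function there; either way, continuity fails.

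\emph{Sufficiency.} Assume $\psi$ is well defined at all $g_1$-binary points, so $\psi$ is a genuine function. Fix $x_0$ and $\varepsilon>0$.

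If $x_0$ is $g_1$-unary, Lemma~\ref{lem:g.limit.equiv} says that $x\to x_0$ forces the length $m$ of the common prefix of the codes of $x$ and $x_0$ to tend to infinity. Then the first $m-1$ digits $\beta_i=\varphi_i(\alpha_1,\ldots,\alpha_i)$ agree for $\psi(x)$ and $\psi(x_0)$. So both values lie in the same $g_2$-cylinder of rank $m-1$, whose length tends to $0$.

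One subtlety: if $x$ is itself $g_1$-binary, the prefix should be taken from whichever of its codes is relevant. Well-definedness makes the choice of code irrelevant for the value.

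If $x_0$ is $g_1$-binary, a neighbourhood of $x_0$ is the union of two cylinders of rank $n$, one for each code, meeting at $x_0$. Applying the cylinder argument on each side separately, the right limit equals $\psi$ computed from one code and the left limit equals $\psi$ computed from the other. Well-definedness makes these equal to $\psi(x_0)$.

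The main obstacle is making the binary-point case precise. We need that for large $n$, the set $\Delta^{g_1}_{\alpha_1\ldots\alpha_n}\cup\Delta^{g_1}_{\alpha'_1\ldots\alpha'_n}$ contains a two-sided neighbourhood of $x_0$. This follows from continuity of the representation: the cylinders of rank $n$ tile $[0,1]$ without overlap, and the only rank-$n$ cylinders containing $x_0$ are those indexed by prefixes of its two codes. If a third cylinder contained $x_0$, then $x_0$ would have a third code, contradicting zero redundancy. A symmetric treatment handles the endpoints $0$ and $1$, where only one side is relevant.
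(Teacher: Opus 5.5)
Your proof is correct and follows the paper's plan: one-sided $g_2$-cylinder estimates along each of the two codes at a $g_1$-binary point, and Lemma~\ref{lem:g.limit.equiv} with the shrinking cylinders $\Delta^{g_2}_{d_1\ldots d_{m-1}}$ at a $g_1$-unary point. Where you differ, you are more careful than the paper. For necessity, you show that the two one-sided limits are the two distinct formula values, so no value assigned at $x_0$ gives continuity; the paper only notes that a continuous $\psi$ must be defined there. In that argument, pick the $x_n$ in the interiors of the cylinders, so that every code of $x_n$ starts with $\alpha_1\ldots\alpha_n$. For sufficiency, your zero-redundancy observation that only the two rank-$n$ cylinders from the codes of $x_0$ contain it gives a direct neighbourhood estimate in place of the paper's subsequence selection.
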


\begin{proof}
  1.~Necessity.
  Suppose the function $\psi$ is defined and continuous
  at every point of the interval $[0, 1]$,
  i.e., for all $x_0 \in [0, 1]$
  \[
    \lim_{x \to x_0} \psi(x) = \psi(x_0),
    \quad
    \text{or equivalently}
    \quad
    \lim_{x \to x_0} |\psi(x) - \psi(x_0)| = 0.
  \]

  Consider any binary point
  $x_0 = \Delta^{g_1}_{c_1\ldots c_ma_1\ldots a_n\ldots}
  = \Delta^{g_1}_{c_1\ldots c_mb_1\ldots b_n\ldots}$.
  Since the function $\psi$ is continuous at the point $x_0$,
  it must be defined at this point,
  and thus, formula~\eqref{eq:g1g2.func.def} gives equal results
  for different representations of the point $x_0$.

  2.~Sufficiency.
  Suppose
  $x_0 = \Delta^{g_1}_{c_1\ldots c_{m-1}a_1\ldots a_n\ldots}
  = \Delta^{g_1}_{c_1\ldots c_{m-1}b_1\ldots b_n\ldots}$
  is an arbitrary $g_1$-binary point ($a_1 \neq b_1$)
  and formula~\eqref{eq:g1g2.func.def} gives equal values
  for different $g_1$-representations of this point.
  We will prove that the function $\psi$ is continuous at the point $x_0$.

  Since $g_1$-encoding is continuous,
  either all $g_1$-cylinders
  \[
    \Delta^{g_1}_{c_1\ldots c_{m-1}a_1},
    \quad
    \Delta^{g_1}_{c_1\ldots c_{m-1}a_1a_2},
    \quad
    \ldots,
    \quad
    \Delta^{g_1}_{c_1\ldots c_{m-1}a_1\ldots a_n},
    \quad
    \ldots
  \]
  lie to the left of the point $x_0$,
  i.e.,
  $x_0 = \max\Delta^{g_1}_{c_1\ldots c_{m-1}a_1\ldots a_n}$
  for all $n \in \N$,
  or all mentioned $g$-cylinders lie to the right of the point $x_0$,
  i.e.,
  $x_0 = \min\Delta^{g_1}_{c_1\ldots c_{m-1}a_1\ldots a_n}$
  for all $n \in \N$.
  We have an analogous situation for another representation
  $\Delta^{g_1}_{c_1\ldots c_{m-1}b_1\ldots b_n\ldots}$
  of the point $x_0$.
  Without loss of generality we assume that
  \[
    x_0 = \max\Delta^{g_1}_{c_1\ldots c_{m-1}a_1\ldots a_n}
    = \min\Delta^{g_1}_{c_1\ldots c_{m-1}b_1\ldots b_n}.
  \]

  Let
  \[
    \psi(\Delta^{g_1}_{c_1\ldots c_{m-1}a_1\ldots a_n\ldots})
    = \Delta^{g_2}_{\beta_1\ldots\beta_{m-1}\beta_m\ldots\beta_{m+n-1}\ldots}.
  \]
  From an arbitrary sequence that tends to $x_0$ from the left
  we can select a strictly increasing subsequence $(x_n)$ such that
  $x_n \in \Delta^{g_1}_{c_1\ldots c_{m-1}a_1\ldots a_n}$
  but
  $x_n \notin \Delta^{g_1}_{c_1\ldots c_{m-1}a_1\ldots a_{n-1}}$.
  So
  \[
    |\psi(x_n) - \psi(\Delta^{g_1}_{c_1\ldots c_{m-1}a_1\ldots a_n\ldots})|
    \leq |\Delta^{g_2}_{\beta_1\ldots\beta_{m+n-1}}| \to 0,
    \quad
    \text{as $n \to \infty$}.
  \]
  Thus the function $\psi$ is left-continuous at the point $x_0$.

  We can prove that the function $\psi$ is right-continuous at the point $x_0$
  analogously
  but using another representation.
  Since the function $\psi$ is left-continuous and right-continuous
  at the point $x_0$, we have
  $\lim\limits_{x \to x_0} \psi(x) = \psi(x_0)$,
  i.e., it is continuous at every $g_1$-binary point.

  Now we will show
  that if the function is well defined at every $g_1$-binary point,
  then it is continuous at every $g_1$-unary point,
  including the points $0$ and $1$.
  Let
  $x_0 = \Delta^{g_1}_{c_1\ldots c_n\ldots}$
  be an arbitrary $g_1$-unary point,
  $\psi(x_0) = \Delta^{g_2}_{d_1\ldots d_n\ldots}$,
  $x_0 \neq x = \Delta^{g_1}_{\alpha_1(x)\ldots\alpha_n(x)\ldots}$.
  By Lemma~\ref{lem:g.limit.equiv},
  the condition $x \to x_0$ is equivalent to the condition $m \to \infty$,
  where $\alpha_m(x) \neq c_m$ but $\alpha_i(x) = c_i$ for $i < m$.

  Since the function is defined at the point $x_0$,
  continuity of the function $\psi$ at the point $x_0$ is equivalent to
  \[
    \lim_{m \to \infty} |\psi(x) - \psi(x_0)| = 0.
  \]
  However
  $|\psi(x) - \psi(x_0)| \leq |\Delta^{g_2}_{d_1\ldots d_{m-1}}| \to 0$
  as $m \to \infty$.
  Thus
  $\lim\limits_{x \to x_0} \psi(x) = \psi(x_0)$.
  This proves the theorem.
\end{proof}

\section{Range and level sets of the function}

Taking into account definition of the function
by equalities~\eqref{eq:g.tribin.def.f}--\eqref{eq:g.tribin.def.bn},
we can easily define the level set
$f^{-1}(\Delta^{2^*}_{\beta_1\beta_2\ldots\beta_n\ldots})$
of the function $f$ for the point
$\Delta^{2^*}_{\beta_1\beta_2\ldots\beta_n\ldots}$.
The elements of this set are points
$\Delta^{s^*}_{\alpha_1\alpha_2\ldots\alpha_n\ldots}$
whose representations satisfy the following conditions:
\begin{enumerate}
  \item
    if $\beta_1 = 0$, then $\alpha_1$ is an arbitrary digit of $A_0$,
  \item
    if $\beta_1 = 1$, then $\alpha_1$ is an arbitrary digit of $A_1$,
  \item
    if $\beta_2 = \beta_1$, then $\alpha_2 = \alpha_1$,
  \item
    if $\beta_2 \neq \beta_1$,
    then $\alpha_2$ is arbitrary but different from $\alpha_1$,
  \item
    $\alpha_{n+1} = \begin{cases}
      \alpha_n & \text{if $\beta_{n+1} = \beta_n$}, \\
      \text{arbitrary $a \in A_{1-\beta_n}$}
               & \text{if $\beta_{n+1} \neq \beta_n$}.
    \end{cases}$
\end{enumerate}

From the above-mentioned facts it follows
that the image of a cylinder under the mapping $f$ is a cylinder
and ``the most massive'' level sets are
$f^{-1}(\Delta^{2^*}_{(01)})$
and
$f^{-1}(\Delta^{2^*}_{(10)})$.
These sets have the cardinality of the continuum
and they are structurally fractal.
The following theorem partially summarizes the facts we said above.

\begin{theorem}
  The range $D_f$ of the function $f$ is the interval $[0, 1]$.
  The image of a cylinder is a cylinder.
  The function $f$ has finite and continuum levels.
\end{theorem}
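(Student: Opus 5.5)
We will organize the argument around one combinatorial fact, namely that the digit map $(\alpha_n)\mapsto(\beta_n)$ is surjective from $L_s$ onto $L_2$ and maps cylinders onto cylinders. All three assertions of the theorem are then read off from the description of level sets given just before the statement.

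\textbf{Range.} Take an arbitrary $y=\Delta^{2^*}_{\beta_1\beta_2\ldots\beta_n\ldots}\in[0,1]$; since the $2^*$-representation is an encoding, every $y$ has such a code. We construct $(\alpha_n)$ recursively.
Choose $\alpha_1\in A_{\beta_1}$, which is possible because $A_0$ and $A_1$ are nonempty: $A_1\ne A_s$ forces $A_0\ne\emptyset$, and symmetrically. Then put $\alpha_{n+1}=\alpha_n$ if $\beta_{n+1}=\beta_n$, and pick any $\alpha_{n+1}\ne\alpha_n$ otherwise, which is possible since $s>2$.
By \eqref{eq:g.tribin.def.b1}--\eqref{eq:g.tribin.def.bn} this gives $f(\Delta^{s^*}_{\alpha_1\alpha_2\ldots})=y$, and the well-definedness lemma guarantees that the value does not depend on which representation of a binary point is used. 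Hence $D_f=[0,1]$. Alternatively, continuity (Theorem~\ref{thm:g1g2.func.continuous}) together with $f$ attaining the values $0=\Delta^{2^*}_{(0)}$ and $1=\Delta^{2^*}_{(1)}$ gives the same conclusion by the intermediate value theorem.

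\textbf{Cylinders.} For a cylinder $\Delta^{s^*}_{c_1\ldots c_m}$, the digits $\beta_1,\ldots,\beta_m$ are determined by $c_1,\ldots,c_m$, so $f(\Delta^{s^*}_{c_1\ldots c_m})\subset\Delta^{2^*}_{\beta_1\ldots\beta_m}$.
For the reverse inclusion, take any tail $\beta_{m+1}\beta_{m+2}\ldots$ and continue the recursive construction above, starting from $\alpha_m=c_m$. This shows every point of $\Delta^{2^*}_{\beta_1\ldots\beta_m}$ is attained inside the given cylinder, so the image is exactly $\Delta^{2^*}_{\beta_1\ldots\beta_m}$.

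\textbf{Level sets.} By conditions (1)--(5), the set of codes in $f^{-1}(y)$ is parametrized by the free choices in the construction:
\begin{itemize}
  \item $|A_{\beta_1}|$ choices for $\alpha_1$;
  \item $s-1$ choices at each index $n$ where $\beta_{n+1}\ne\beta_n$;
  \item no choice where $\beta_{n+1}=\beta_n$.
\end{itemize}
\emph{Finite levels.} If $(\beta_n)$ has only finitely many changes, for instance $y=\Delta^{2^*}_{(0)}=0$ or any point whose code is eventually constant, then there are finitely many codes. Each point has at most two codes, so the level set is finite.
\emph{Continuum levels.} If $(\beta_n)$ changes infinitely often, as for $y=\Delta^{2^*}_{(01)}$, then at infinitely many positions there are at least $2$ independent choices, since $s-1\ge2$. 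This yields $2^{\aleph_0}$ distinct codes. Because the map from codes to points is at most two-to-one, the level set has the cardinality of the continuum; the upper bound $\mathfrak c$ is trivial.

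\textbf{Main obstacle.} The delicate point is the passage from codes to points. One must check that distinct admissible sequences give distinct points, up to at most two codes per point, which follows from zero redundancy. One must also check that a value $y$ with two $2^*$-codes is handled correctly, by taking the union of the preimages of both codes.
For a binary $y$ both codes are eventually constant, so both preimage sets are finite and the finiteness claim survives. For the continuum claim it suffices to exhibit one $y$, such as $\Delta^{2^*}_{(01)}$, which is $2^*$-unary.
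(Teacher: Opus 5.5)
Your proof is correct. It differs from the paper's mainly in how the range is obtained and in how much of the level-set structure is actually proved.

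For the range, the paper computes only the levels of $0$ and $1$, namely $\{\Delta^{s^*}_{(a)}\colon a\in A_0\}$ and $\{\Delta^{s^*}_{(a)}\colon a\in A_1\}$, and then concludes $D_f=[0,1]$. That step implicitly uses continuity and the intermediate value theorem, so your ``alternative'' is exactly the paper's route. Your digit-by-digit construction of a preimage needs neither continuity nor the intermediate value theorem. Restarted from $\alpha_m=c_m$, it also proves the cylinder statement, which the paper only says follows from its description of level sets.

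For the levels, the paper exhibits examples: $0$ and $1$ are finite levels, and $\Delta^{2^*}_{(01)}$ is a continuum level, with a self-similarity remark that goes beyond the statement. You instead classify every level and supply the passage from codes to points that the paper leaves out.

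Two small fixes are needed.
\begin{enumerate}
\item Your count of $s-1$ choices at each change of $\beta$ comes from \eqref{eq:g.tribin.def.bn}, not from condition~(5) as printed. That condition requires $a\in A_{1-\beta_n}$, which is inconsistent with \eqref{eq:g.tribin.def.bn} and with the paper's own formula for $f^{-1}(\Delta^{2^*}_{(01)})$. So cite the definition instead.
\item A level is finite because it is the image of a finite set of admissible codes: each of its points has at least one such code. The bound of two codes per point is not the relevant fact there; it is what the continuum case needs.
\end{enumerate}

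Finally, add a line explaining why both codes of a $2^*$-binary $y$ are eventually constant. The projector $\Delta^{2}_{\beta_1\beta_2\ldots}\mapsto\Delta^{2^*}_{\beta_1\beta_2\ldots}$ is strictly monotone, hence injective. So a point with two $2^*$-codes comes from a dyadic rational, whose two binary codes end in $(0)$ and $(1)$.
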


\begin{proof}
  From the definition of the function it follows directly
  that its zeros are points $\Delta^{s^*}_{(a)}$, where $a \in A_0$.
  The level set for the point
  $y = 1 = \Delta^{2^*}_{(1)}$
  is the set
  $\{ \Delta^{s^*}_{(a)}, a \in A_1 \}$.
  Thus $D_f = [0, 1]$.
  The level sets for the points
  $0 = \Delta^{2^*}_{(0)}$ and $1 = \Delta^{2^*}_{(1)}$
  are finite.
  It is evident that the level set
  \[
    f^{-1}(\Delta^{2^*}_{(01)})
    = \{ \Delta^{s^*}_{\alpha_1\alpha_2\ldots\alpha_n\ldots}, \;
      \alpha_1 \in A_0, \; \alpha_{n+1} \neq \alpha_n \}
  \]
  for the point $\Delta^{2^*}_{(01)}$ is continuum and structurally fractal
  because
  \[
    f^{-1}(\Delta^{2^*}_{(01)})
    = \bigcup_{\alpha_1 \in A_0} \bigcup_{\alpha_2 \neq \alpha_1}
    \Delta^{s^*}_{\alpha_1\alpha_2} \cap f^{-1}(\Delta^{2^*}_{(01)}),
  \]
  and the sets consisting in the union are ``topologically self-similar.''
\end{proof}

\section{Nowhere monotonocity of the function}

\begin{theorem}
  The function $f$
  defined by equalities~\eqref{eq:g.tribin.def.f}--\eqref{eq:g.tribin.def.bn}
  is nowhere monotonic.
\end{theorem}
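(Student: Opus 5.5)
Since every interval of $[0,1]$ contains an $s^*$-cylinder of some rank (the encoding is continuous and $|\Delta^{s^*}_{c_1\ldots c_m}|\to 0$), it suffices to show that $f$ is not monotonic on any cylinder $\Delta^{s^*}_{c_1\ldots c_m}$. To do this I will exhibit three points $x_1<x_2<x_3$ in the cylinder with $f(x_1)<f(x_2)>f(x_3)$, or with the reversed inequalities. The key tool is that the $2^*$-representation is topologically equivalent to the binary one. Hence the order of values of $f$ is decided by the first differing $\beta$-digit: a point of the $2^*$-cylinder $\Delta^{2^*}_{\beta_1\ldots\beta_{k}0}$ lies to the left of a point of $\Delta^{2^*}_{\beta_1\ldots\beta_k1}$, and only the common endpoint is shared. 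The same holds for the order of arguments with respect to the $\alpha$-digits.

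Fix the cylinder with base $c_1\ldots c_m$ and put $\beta_m=\varphi_m(c_1,\ldots,c_m)$. By \eqref{eq:g.tribin.def.bn}, the choice $\alpha_{m+1}=c_m$ gives $\beta_{m+1}=\beta_m$, and any $\alpha_{m+1}\neq c_m$ gives $\beta_{m+1}=1-\beta_m$. Because $s>2$, I will choose three consecutive digits $a<a+1<a+2$ in $A_s$ such that the middle one differs from $c_m$ while one of the outer ones equals $c_m$, or, failing that, at least such that the pattern "equal / different" alternates. Concretely, if $c_m\in\{1,\ldots,s-2\}$ take $a+1=c_m$; then the subcylinders with bases $c_1\ldots c_m(c_m-1)$, $c_1\ldots c_mc_m$, $c_1\ldots c_m(c_m+1)$ receive $\beta_{m+1}=1-\beta_m,\ \beta_m,\ 1-\beta_m$. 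If $c_m=0$ or $c_m=s-1$, I will instead descend one more level. For example, inside $\Delta^{s^*}_{c_1\ldots c_m1}$ (with $\beta_{m+1}=1-\beta_m$), I use the digits $0,1,2$ at position $m+2$, which give $\beta_{m+2}=1-\beta_{m+1},\ \beta_{m+1},\ 1-\beta_{m+1}$. So in every case we get a cylinder of rank $r$ (equal to $m$ or $m+1$) contained in the original one, and three consecutive subcylinders of rank $r+1$, ordered left to right, whose images lie in $2^*$-cylinders with last digit pattern $(\gamma,1-\gamma,\gamma)$, sharing the same first $r$ digits.

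Pick $x_1,x_2,x_3$ as interior points of these three subcylinders, say the points whose further digits make $f$ avoid endpoints. Then $f(x_1)$ and $f(x_3)$ lie in $\Delta^{2^*}_{\beta_1\ldots\beta_r\gamma}$ and $f(x_2)$ lies in $\Delta^{2^*}_{\beta_1\ldots\beta_r(1-\gamma)}$. If $\gamma=0$, then $f(x_2)>f(x_1)$ and $f(x_2)>f(x_3)$; if $\gamma=1$, the inequalities reverse. In either case $f$ is not monotonic on the cylinder, and hence not on any interval.

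The main obstacle is guaranteeing \emph{strict} inequalities, since adjacent $2^*$-cylinders share an endpoint. I will handle this by choosing the tails explicitly. Take each $x_i$ with digit tail $(\alpha_{r+1},\alpha_{r+1}, \ldots)$ perturbed by one change, for instance tail $\alpha_{r+1}\,\alpha_{r+1}\,d\,(d)$ with $d\ne\alpha_{r+1}$. Its $\beta$-tail is then $\gamma\gamma(1-\gamma)(1-\gamma)\ldots$, which is neither all zeros nor all ones after position $r+1$. So the value lies in the interior of the corresponding rank-$(r+1)$ $2^*$-cylinder, and the zero redundancy of the $2^*$-representation gives uniqueness, hence strictness.
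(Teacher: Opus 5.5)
Your proposal is correct and follows essentially the same route as the paper: reduce to a cylinder $\Delta^{s^*}_{c_1\ldots c_m}$ with $0<c_m<s-1$ (descending one level when $c_m\in\{0,s-1\}$, which uses $s>2$), and exploit the alternating pattern $1-\beta_m,\ \beta_m,\ 1-\beta_m$ of $\beta_{m+1}$ produced by the digits $c_m-1,\,c_m,\,c_m+1$. The only difference is the choice of the three points: the paper takes the endpoints $\Delta^{s^*}_{c_1\ldots c_m(0)}$, $\Delta^{s^*}_{c_1\ldots c_m(s-1)}$ and the point $\Delta^{s^*}_{c_1\ldots c_{m-1}(c_m)}$, whose images are the two distinct endpoints of $f(\Delta^{s^*}_{c_1\ldots c_m})$ (with $f(x_0)=f(x_2)$), so strictness is automatic, whereas your interior points with explicit tails reach the same conclusion with a little more work. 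Your opening sentence, which asks for the middle digit to differ from $c_m$, is garbled, but your concrete choice $a+1=c_m$ is the right one.
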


\begin{proof}
  We will use the definition of a nowhere monotonic function,
  namely, we will prove
  that the function $f$ is not monotonic on every interval from its domain.

  Let $(a, b)$ be an arbitrary interval that is a subset of $[0, 1]$.
  It is evident that there exists $\Delta^{s^*}$-cylinder
  that is a subset of $(a, b)$.
  Suppose this is the cylinder $\Delta^{s^*}_{c_1\ldots c_m}$.
  Let us consider two cases.

  1. If $0 \neq c_m \neq s - 1$, we consider three points:
  \[
    x_0 = \Delta^{s^*}_{c_1\ldots c_m(0)},
    \quad
    x_1 = \Delta^{s^*}_{c_1\ldots c_{m-1}(c_m)},
    \quad
    x_2 = \Delta^{s^*}_{c_1\ldots c_{m}(s-1)}.
  \]
  It is evident that $x_0 < x_1 < x_2$.
  In addition to that
  \[
    f(x_0) = \Delta^{s^*}_{\beta_1\ldots\beta_m(1-\beta_m)},
    \quad
    f(x_1) = \Delta^{s^*}_{\beta_1\ldots\beta_{m-1}(\beta_m)},
    \quad
    f(x_2) = \Delta^{s^*}_{\beta_1\ldots\beta_{m}(1-\beta_m)}.
  \]
  Hence we have $f(x_0) > f(x_1) < f(x_2)$ if $\beta_m = 0$
  and $f(x_0) < f(x_1)$ if $\beta_m = 1$.

  Thus the function $f$ is not monotonic on the cylinder
  $\Delta^{s^*}_{c_1\ldots c_m}$,
  so it is not monotonic on the interval $(a, b)$ as well.

  2. If $c_m \in \{ 0, s - 1 \}$, we consider the cylinder
  $\Delta^{s^*}_{c_1\ldots c_mc} \subset \Delta^{s^*}_{c_1\ldots c_m}
    \subset (a, b)$,
  where $0 < c < s - 1$.
  As we have proved the function is not monotonic on the cylinder
  $\Delta^{s^*}_{c_1\ldots c_mc}$,
  so it is not monotonic on $(a, b)$ as well.
  This proves the theorem.
\end{proof}

\section{Variation of the function}

\begin{lemma}
  \label{lem:g.tribin.cyl.ineq}
  For any cylinder $\Delta^{s^*}_{c_1c_2\ldots c_m}$ of the $m$th rank,
  there exists natural number $k$ such that
  \begin{equation}
    \label{eq:g.tribin.cyl.ineq}
    \sum_{\alpha_1=0}^{s-1} \sum_{\alpha_2=0}^{s-1} \ldots
      \sum_{\alpha_k=0}^{s-1} |f(\Delta^{s^*}_{c_1c_2\ldots c_m\alpha_1\ldots\alpha_k})|
    \geq 2 |f(\Delta^{s^*}_{c_1c_2\ldots c_m})|.
  \end{equation}
\end{lemma}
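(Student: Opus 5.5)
Throughout, I read $|f(\Delta^{s^*}_{c_1\ldots c_m})|$ as the length of the image of the cylinder. By the description of level sets, the image of $\Delta^{s^*}_{c_1\ldots c_m}$ under $f$ is the $2^*$-cylinder $\Delta^{2^*}_{\beta_1\ldots\beta_m}$, where $\beta_i=\varphi_i(c_1,\ldots,c_i)$. I write $P=|\Delta^{2^*}_{\beta_1\ldots\beta_m}|$. Likewise, each subcylinder $\Delta^{s^*}_{c_1\ldots c_m\alpha_1\ldots\alpha_k}$ maps onto $\Delta^{2^*}_{\beta_1\ldots\beta_m\gamma_1\ldots\gamma_k}$, with the tail $(\gamma_j)$ determined by \eqref{eq:g.tribin.def.bn} starting from $\alpha_0:=c_m$ and $\gamma_0:=\beta_m$.

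\textbf{Step 1: counting multiplicities.} The plan is to rewrite the left-hand side of \eqref{eq:g.tribin.cyl.ineq} as a weighted sum over binary tails. Call an index $j$ a switch of $\gamma$ if $\gamma_j\neq\gamma_{j-1}$. Then $\gamma_j=\gamma_{j-1}$ forces $\alpha_j=\alpha_{j-1}$, while a switch allows exactly $s-1$ choices of $\alpha_j$. Hence the number of tuples $(\alpha_1,\ldots,\alpha_k)$ producing a given tail $\gamma$ is $(s-1)^{\mathrm{sw}(\gamma)}$, and
\[
  \sum_{\alpha_1,\ldots,\alpha_k}|f(\Delta^{s^*}_{c_1\ldots c_m\alpha_1\ldots\alpha_k})|
  =\sum_{\gamma\in\{0,1\}^k}(s-1)^{\mathrm{sw}(\gamma)}\,|\Delta^{2^*}_{\beta_1\ldots\beta_m\gamma}|.
\]
The $2^*$-cylinders of rank $m+k$ inside $\Delta^{2^*}_{\beta_1\ldots\beta_m}$ are non-overlapping intervals that cover it, because the $2^*$-representation is continuous. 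So their lengths add up to $P$.

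\textbf{Step 2: the reduction.} Let $E_0(k)$, $M_1(k)$, $M_2(k)$ be the total lengths of the tails with $0$, exactly $1$, and at least $2$ switches, so that $E_0+M_1+M_2=P$. Since $s-1\ge 2$, the weights are at least $1$, $2$ and $4$ respectively. The sum is therefore at least $E_0+2M_1+4M_2=2P-E_0+2M_2$. It thus suffices to find $k$ with $2M_2(k)\ge E_0(k)$. For $s\ge 4$ a cruder bound, dropping $E_0$ and using $(s-1)(P-E_0)$, already works, but I would treat all $s\ge3$ uniformly.

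\textbf{Step 3: choosing $k$ (the main point).} The only zero-switch tail is the constant one $\beta_m^k$. By the Remark on cylinders shrinking, $E_0(k)=|\Delta^{2^*}_{\beta_1\ldots\beta_m\beta_m\ldots\beta_m}|\to0$. Choose $N$ with $E_0(N)\le P/4$. For $j<N$, let $C_j=\Delta^{2^*}_{\beta_1\ldots\beta_m\beta_m^{j}(1-\beta_m)}$ be the cylinder where the first switch occurs at position $j+1$. These cylinders are disjoint and, together with the constant cylinder of length $E_0(N)$, cover the parent interval, so $\sum_{j<N}|C_j|\ge 3P/4$.

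Inside each $C_j$, the tails with exactly one switch form the single cylinder $\Delta^{2^*}_{\beta_1\ldots\beta_m\beta_m^{j}(1-\beta_m)^{k-j}}$, whose length tends to $0$ as $k\to\infty$. Since there are only finitely many $j<N$, I can pick $k>N$ such that this length is at most $|C_j|/2$ for every $j<N$. Then all other tails inside $C_j$ have at least two switches, which gives
\[
  M_2(k)\ge\sum_{j<N}\frac{|C_j|}{2}\ge\frac{3P}{8}.
\]
On the other hand, $E_0(k)\le E_0(N)\le P/4$. Hence $2M_2(k)\ge 3P/4\ge E_0(k)$, which proves \eqref{eq:g.tribin.cyl.ineq}.

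The delicate point is exactly this step. The ratios of lengths of $2^*$-cylinders are arbitrary, so no single $k$ (for instance $k=1$) works uniformly. One must exploit that only finitely many "thin" one-switch and zero-switch branches need to be made small.
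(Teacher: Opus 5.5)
Your proof is correct and uses the same core idea as the paper. You count how many subcylinders $\Delta^{s^*}_{c_1\ldots c_m\alpha_1\ldots\alpha_k}$ share a given image. Only the constant branch $\alpha_1=\ldots=\alpha_k=c_m$ has a single preimage, and its image shrinks. So the rest of $\Delta^{2^*}_{\beta_1\ldots\beta_m}$ is counted with extra multiplicity.

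The difference lies in how the factor $2$ is actually obtained. The paper records $2^k$ preimages for the alternating branch, which is the value of your $(s-1)^k$ at $s=3$. It then asserts that the complement of the constant branch is covered at least three times. Your exact count $(s-1)^{\mathrm{sw}(\gamma)}$ shows this holds for $s\ge 4$, which is your cruder bound. For $s=3$, however, every tail with exactly one switch is covered exactly twice. The pointwise multiplicity bound then gives only $E_0+2(P-E_0)=2P-E_0$, which falls short of $2P$.

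Your Step~3 supplies what the paper's sketch lacks. By truncating at a finite $N$, only finitely many one-switch branches need to be made thin. This forces $M_2(k)\ge 3P/8\ge E_0(k)/2$. Your route costs one extra limiting argument, but it gives a complete proof for every $s\ge 3$, including the case $s=3$ allowed by the paper's standing assumption $2<s$.
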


\begin{proof}
  Let
  $\Delta^{2^*}_{\beta_1\ldots\beta_m}=f(\Delta^{s^*}_{c_1c_2\ldots c_m})$.
  This cylinder contains images of all cylinders
  \begin{equation}
    \label{eq:g.tribin.cylinders}
    \Delta^{s^*}_{c_1\ldots c_m\alpha_1\ldots\alpha_k},
    \quad
    \alpha_i \in A_s,
    \quad
    i = \overline{1, k}.
  \end{equation}
  Among them, only the cylinder
  $f(\Delta^{s^*}_{c_1\ldots c_mc_m\ldots c_m})$
  has a single preimage.
  At the same time the cylinder
  $f(\Delta^{s^*}_{c_1\ldots c_mb_1\ldots b_k})$,
  where $c_m \neq b_1$, $b_i \neq b_{i+1}$, $i = \overline{1, k-1}$,
  has $2^k$ preimages.
  It is evident that there exists $k \in \N$ such that
  inequality~\eqref{eq:g.tribin.cyl.ineq} holds because
  \begin{enumerate}
  \item
    $|\Delta^{2^*}_{\alpha_1\alpha_2\ldots\alpha_n}| \to 0$
    as $n \to \infty$,
  \item
    the interval of the length
    $|\Delta^{2^*}_{\beta_1\ldots\beta_m}|
      - |f(\Delta^{s^*}_{c_1\ldots c_m\underbrace{\scriptstyle c_m\ldots c_m}_{k}})|$
    may be covered by other cylinders from the set~\eqref{eq:g.tribin.cylinders}
    at least three times.
  \end{enumerate}
  This proves the lemma.
\end{proof}

\begin{theorem}
  The function $f$
  defined by equalities~\eqref{eq:g.tribin.def.f}--\eqref{eq:g.tribin.def.bn}
  is a continuous function of an unbounded variation.
\end{theorem}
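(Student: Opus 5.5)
Continuity follows from the results already established: $f$ is a function of the form \eqref{eq:g1g2.func.def} with $g_1 = s^*$, $g_2 = 2^*$ and $\beta_n = \varphi_n(\alpha_1,\ldots,\alpha_n)$. The previous lemma showed that $f$ is well defined at every $s^*$-binary point, so Theorem~\ref{thm:g1g2.func.continuous} gives continuity on the whole interval $[0,1]$. The substance of the proof is therefore the unbounded variation.

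For the variation, I would iterate Lemma~\ref{lem:g.tribin.cyl.ineq}. Since $f$ is continuous and the image of a cylinder is a cylinder (an interval), for any cylinder $\Delta^{s^*}_{c_1\ldots c_m}$ the oscillation of $f$ on it equals $|f(\Delta^{s^*}_{c_1\ldots c_m})|$. Hence one can choose two points $u<v$ in the cylinder with $|f(u)-f(v)| = |f(\Delta^{s^*}_{c_1\ldots c_m})|$. Given a finite family $\mathcal{F}$ of pairwise non-overlapping cylinders, order them left to right and take these points in each. The variation of $f$ on $[0,1]$ is then at least $\sum_{\Delta\in\mathcal{F}} |f(\Delta)|$, because the total variation is superadditive over non-overlapping subintervals and dominates the oscillation on each.

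Next I would build a sequence of finite partitions by cylinders. Start with $\mathcal{F}_0 = \{\Delta^{s^*}_0,\ldots,\Delta^{s^*}_{s-1}\}$, whose images have total length $S_0>0$. Given $\mathcal{F}_j$, apply Lemma~\ref{lem:g.tribin.cyl.ineq} to each cylinder in it, with its own $k$, and replace that cylinder by its subcylinders of rank $m+k$. The result is again a finite family of non-overlapping cylinders covering $[0,1]$, and by \eqref{eq:g.tribin.cyl.ineq} its sum satisfies $S_{j+1} \ge 2S_j$. Thus $S_j \ge 2^j S_0 \to \infty$, so $V_0^1 f \ge S_j$ for all $j$ and the variation is infinite. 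Moreover, restricting this argument to an arbitrary cylinder shows the variation is unbounded on every subinterval.

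The main obstacle is the reliance on Lemma~\ref{lem:g.tribin.cyl.ineq}, whose proof is sketchy. If it needs shoring up, I would give a direct estimate. Take $k$ large and count the subcylinders $\Delta^{s^*}_{c_1\ldots c_m b_1\ldots b_k}$ with $b_1\neq c_m$ and $b_{i+1}\neq b_i$; there are $(s-1)^k$ of them. All of them have the same image, the cylinder obtained from $\beta_1\ldots\beta_m$ by $k$ alternating digits. Its length is at least a fixed fraction of $|\Delta^{2^*}_{\beta_1\ldots\beta_m}|$ that decays no faster than comparable binary-type cylinders. Then $s-1\ge 2$ preimages per step should beat the shrinkage, perhaps after passing to the alternating word structure in the $2^*$-representation. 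The minor technical point is the superadditivity step, which is routine.
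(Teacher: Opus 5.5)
Your proof is correct and follows the paper's route: continuity from Theorem~\ref{thm:g1g2.func.continuous} together with well-definedness, then iterating Lemma~\ref{lem:g.tribin.cyl.ineq} so that the total length of the images of a cylinder partition grows like $2^n$ and bounds the variation from below (your per-cylinder choice of $k$ also avoids the paper's tacit use of a common $k$ at each stage). One caveat concerns only your optional fallback: for a general $2^*$-representation the alternating-word image cylinder can shrink arbitrarily fast, and even for classical binary with $s=3$ it yields only a factor $1$, so a direct proof of the lemma should instead sum over all extensions, using that an image cylinder whose word has $t$ digit changes has $(s-1)^t$ preimages, that $(s-1)^t\ge 4$ for $t\ge 2$, and that the words with $t\le 1$ have total image length tending to $0$.
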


\begin{proof}
  We have already proved that the function is continuous.
  Since the image of a cylinder under the mapping $f$ is a cylinder
  and change in function on a cylinder (difference of maximum and minimum values)
  is equal to the length of its image,
  it is evident that variation $V(f)$ of the function $f$ is greater than
  the total length $W_k$ of images of all $s^*$-cylinders of rank $k$
  for all $k \in \N$, i.e.,
  \[
    V(f) > W_k
    = \sum_{\alpha_1=0}^{s-1} \sum_{\alpha_2=0}^{s-1} \ldots
      \sum_{\alpha_k=0}^{s-1} |f(\Delta^{s^*}_{\alpha_1\alpha_2\ldots\alpha_k})|.
  \]

  By Lemma~\ref{lem:g.tribin.cyl.ineq}, there exists $k_1 \in \N$ such that
  \[
    V(f) > V_1 \equiv W_{k_1} \geq 2.
  \]

  By the same lemma, there exists $k_2 \in \N$ such that
  \[
    V(f) > W_{k_1+k_2} \geq 2 V_1 > 2^2.
  \]
  Similar arguments on cylinders of rank $k_1 + k_2$ yield to
  \[
    V(f) > W_{k_1+k_2+k_3} \geq 2^3,
  \]
  and in general,
  \[
    V(f) > W_{k_1+k_2+\ldots+k_n} \geq 2^n
  \]
  for all $n \in \N$.
  Thus
  $V(f) = \lim\limits_{n \to \infty} 2^n = \infty$.
\end{proof}

\section{Concluding remarks}

If we change condition~\eqref{eq:g.tribin.def.bn} in the defintion of the function $f$
by the condition
\begin{equation}
  \label{eq:g.tribin.def.bn.flip}
  \beta_{n+1} = \begin{cases}
    \beta_n     & \text{if $\alpha_{n+1} \neq \alpha_n$}, \\
    1 - \beta_n & \text{if $\alpha_{n+1} = \alpha_n$},
  \end{cases}
\end{equation}
the function loses well-definedness at $s^*$-binary points.

Among the digits of the alphabet $A_s$ that are different from $0$,
there exists a digit $c$ such that
$c \in A_i$ and $c - 1 \in A_{1-i}$, $i \in \{ 0, 1 \}$.
Then values of the function $f$
calculated for different representations of the $s^*$-binary point
$\Delta^{s^*}_{c(0)} = \Delta^{s^*}_{[c-1](s+1)}$
are not equal.
Really,
\begin{gather*}
  f(\Delta^{s^*}_{c(0)})
  = \Delta^{2^*}_{\beta_1\beta_1([1-\beta_1]\beta_1)}, \\
  f(\Delta^{s^*}_{[c-1](s-1)})
  = \Delta^{2^*}_{\beta_1'\beta_1'([1-\beta'_1]\beta_1')}
  \neq f(\Delta^{s^*}_{c(0)}),
\end{gather*}
because $\beta_1 \neq \beta_1'$.
By Theorem~\ref{thm:g1g2.func.continuous},
it follows that the function loses its well-definedness and continuity.

Let us remark that we must give certain metric meaning
to $s$-symbol representations
in order to study differential properties of the function $f$.
However this is an idea for another paper.


\begin{thebibliography}{10}

\bibitem{Bush:1952:CFD}
K.~A. Bush, \emph{Continuous functions without derivatives}, Amer. Math.
  Monthly \textbf{59} (1952), no.~4, 222--225. \MR{0049278 (14,148b)}

\bibitem{Bush:1962:LRF}
K.~A. Bush, \emph{Locally recurrent functions}, Amer. Math. Monthly \textbf{69}
  (1962), no.~3, 199--206. \MR{0132131 (24 \#A1978)}

\bibitem{Bush:1971:PFC}
K.~A. Bush, \emph{Pathological functions, crinkly and wild}, J. Math. Anal.
  Appl. \textbf{35} (1971), no.~3, 559--562. \MR{0279247 (43 \#4970)}

\bibitem{Chen:2020:FTS}
Y.-G. Chen, \emph{Fractal texture and structure of central place systems},
  Fractals \textbf{28} (2020), no.~1, Paper No. 2050008.

\bibitem{Galambos:1976:RRN}
J.~Galambos, \emph{Representations of real numbers by infinite series}, Lecture
  Notes in Math., vol. 502, Springer-Verlag, Berlin, 1976. \MR{0568141 (58
  \#27873)}

\bibitem{Jarnicki:2015:CND}
M.~Jarnicki and P.~Pflug, \emph{Continuous nowhere differentiable functions.
  {The} monsters of analysis}, Springer Monogr. Math., Springer, Cham, 2015.
  \MR{3444902}

\bibitem{Massopust:1997:FFA}
P.~R. Massopust, \emph{Fractal functions and their applications}, Chaos
  Solitons Fractals \textbf{8} (1997), no.~2, 171--190. \MR{1431371
  (97m:28011)}

\bibitem{Panasenko:2009:HBD}
O.~B. Panasenko, \emph{{Hausdorff}--{Besicovitch} dimension of the graph of one
  continuous nowhere-differentiable function}, Ukrainian Math. J. \textbf{61}
  (2009), no.~9, 1448--1466. \MR{2752552 (2012f:28009)}

\bibitem{Panasenko:2009:OKN:en}
O.~B. Panasenko, \emph{A one-parameter class of continuous functions close to
  {Cantor} projectors}, Mat. Stud. \textbf{32} (2009), no.~1, 3--11 (in
  Ukrainian). \MR{2596792 (2011b:26009)}

\bibitem{Pratsiovytyi:1989:NKP:en}
M.~V. Pratsiovytyi, \emph{Continuous {Cantor} projectors}, Methods of the Study
  of Algebraic and Topological Structures, Kyiv State Pedagog. Inst., Kyiv,
  1989, pp.~95--105 (in Russian).

\bibitem{Pratsiovytyi:1998:FPD:en}
M.~V. Pratsiovytyi, \emph{Fractal approach to the study of singular probability
  distributions}, Natl. Pedagog. Mykhailo Dragomanov Univ. Publ., Kyiv, 1998
  (in Ukrainian).

\bibitem{Pratsiovytyi:2002:FVO:en}
M.~V. Pratsiovytyi, \emph{Fractal properties of one continuous nowhere
  differentiable function}, Nauk. Zap. Nats. Pedagog. Univ. Mykhaila
  Drahomanova. Fiz.-Mat. Nauky (2002), no.~3, 351--362 (in Ukrainian).

\bibitem{Pratsiovytyi:2011:NMS:en}
M.~V. Pratsiovytyi, \emph{Nowhere monotonic singular functions}, Nauk. Chasop.
  Nats. Pedagog. Univ. Mykhaila Drahomanova. Ser.~1. Fiz.-Mat. Nauky (2011),
  no.~12, 24--36 (in Ukrainian).

\bibitem{Pratsiovytyi:2022:DSK:en}
M.~V. Pratsiovytyi, \emph{Two-symbol systems of encoding of real numbers and
  their applications}, Nauk. Dumka, Kyiv, 2022 (in Ukrainian).

\bibitem{Pratsiovytyi:2021:GTF}
M.~V. Pratsiovytyi, O.~M. Baranovskyi, and {\relax Yu}.~P. Maslova,
  \emph{Generalization of the {Tribin} function}, J. Math. Sci. (N.~Y.)
  \textbf{253} (2021), no.~2, 276--288. \MR{4016749}

\bibitem{Pratsiovytyi:2019:NMF:en}
M.~V. Pratsiovytyi, N.~V. Cherchuk, {\relax Yu}.~{\relax Yu}. Vovk, and A.~V.
  Shevchenko, \emph{Nowhere monotonic functions related to representations of
  numbers by {Cantor} series}, Fractal Analysis and Related Problems: Trans.
  Inst. Math. NAS Ukraine \textbf{16} (2019), no.~3, 198--209 (in Ukrainian).

\bibitem{Pratsiovytyi:2022:CA2S}
M.~V. Pratsiovytyi, {\relax Ya}.~V. Goncharenko, I.~M. Lysenko, and S.~P.
  Ratushniak, \emph{Continued {$A_2$-fractions} and singular functions}, Mat.
  Stud. \textbf{58} (2022), no.~1, 3--12. \MR{4509563}

\bibitem{Pratsiovytyi:2022:OCS}
M.~V. Pratsiovytyi, {\relax Ya}.~V. Goncharenko, I.~M. Lysenko, and O.~V.
  Svynchuk, \emph{On one class of singular nowhere monotone functions}, J.
  Math. Sci. (N.~Y.) \textbf{263} (2022), no.~2, 268--281. \MR{4213150}

\bibitem{Pratsiovytyi:2013:SSN:en}
M.~V. Pratsiovytyi and A.~V. Kalashnikov, \emph{Self-affine singular and
  nowhere monotone functions related to the {$Q$-representation} of real
  numbers}, Ukrainian Math. J. \textbf{65} (2013), no.~3, 448--462 (in
  Ukrainian). \MR{3120031}

\bibitem{Pratsiovytyi:2009:DFV:en}
M.~Pratsiovytyi and O.~Panasenko, \emph{Differential and fractal properties of
  a class of self-affine functions}, Visn. Lviv. Univ. Ser. Mekh.-Mat. (2009),
  no.~70, 128--142 (in Ukrainian).

\bibitem{Pratsiovytyi:2013:FPF}
M.~Pratsiovytyi and N.~Vasylenko, \emph{Fractal properties of functions defined
  in terms of {$Q$-representation}}, Int. J. Math. Anal. (Ruse) \textbf{7}
  (2013), no.~64, 3155--3167. \MR{3162174}

\bibitem{Ratushniak:2023:NNMA2:en}
S.~P. Ratushniak, \emph{A continuous nowhere monotonic function defined in
  terms of the {$A_2$-continued} fraction representation of numbers}, Bukovyn.
  Mat. Zh. \textbf{11} (2023{\noopsort{a}}), no.~1, 126--133 (in Ukrainian).

\bibitem{Ratushniak:2023:NNMA:en}
S.~P. Ratushniak, \emph{A continuous nowhere monotonic function defined in
  terms of the {$A$-continued} fraction representation of numbers}, Bukovyn.
  Mat. Zh. \textbf{11} (2023{\noopsort{b}}), no.~2, 236--245 (in Ukrainian).

\bibitem{Schweiger:1995:ETF}
F.~Schweiger, \emph{Ergodic theory of fibred systems and metric number theory},
  Oxford Sci. Publ., Oxford Univ. Press, New York, 1995. \MR{1419320
  (97h:11083)}

\bibitem{Sierpinski:1916:CCC}
W.~Sierpi{\'n}ski, \emph{Sur une courbe cantorienne qui contient une image
  biunivoque et continue de toute courbe donn{\'e}e}, C. R. Acad. Sci. Paris
  \textbf{162} (1916), 629--632.

\bibitem{Takagi:1901:SEC}
T.~Takagi, \emph{A simple example of the continuous function without
  derivative}, T\=oky\=o S\=ugaku-Butsurigakkwai H\=okoku \textbf{1} (1901),
  176--177.

\bibitem{Turbin:1992:FMF:en}
A.~F. Turbin and M.~V. Pratsiovytyi, \emph{Fractal sets, functions, and
  probability distributions}, Nauk. Dumka, Kyiv, 1992 (in Russian). \MR{1353239
  (96f:28010)}

\bibitem{Wunderlich:1952:USN}
W.~Wunderlich, \emph{{Eine {\"u}berall stetige und nirgends differenzierbare
  Funktion}}, Elem. Math. \textbf{7} (1952), no.~4, 73--79. \MR{0049279
  (14,148c)}

\end{thebibliography}

\providecommand{\noopsort}[1]{}
\providecommand{\bysame}{\leavevmode\hbox to3em{\hrulefill}\thinspace}
\providecommand{\MR}{\relax\ifhmode\unskip\space\fi MR }
\providecommand{\MRhref}[2]{%
  \href{http://www.ams.org/mathscinet-getitem?mr=#1}{#2}
}
\providecommand{\href}[2]{#2}

\end{document}